\DeclareMathOperator{\lc}{H}
\newcommand{\length}{\ell}
\newcommand{\eh}{\operatorname{e}}
\newcommand{\mf}{\mathfrak}
\DeclareMathOperator{\m}{\mathfrak{m}}
\newtheorem{theorem}{Theorem}
\newtheorem{lemma}[theorem]{Lemma}
\newtheorem{corollary}[theorem]{Corollary}
\newtheorem*{statement*}{Statement}
\newtheorem*{theorem*}{Theorem}
\newtheorem*{lemma*}{Lemma}
\newtheorem*{fact*}{Fact}
\theoremstyle{definition}
\newtheorem{definition}[theorem]{Definition}
\newtheorem*{definition*}{Definition}
\newtheorem{example}[theorem]{Example}
\newtheorem*{example*}{Example}
\theoremstyle{remark}
\newtheorem{remark}[theorem]{Remark}
\begin{document}

\title{Colength, multiplicity, and ideal closure operations}

\author{Linquan Ma}
\address{Department of Mathematics, Purdue University, West Lafayette, IN 47907 USA}
\email{ma326@purdue.edu}

\author{Pham Hung Quy}
\address{Department of Mathematics, FPT University, and Thang Long Institute of Mathematics and Applied Sciences, Hanoi, Vietnam}
\email{quyph@fe.edu.vn}

\author{Ilya Smirnov}
\address{Department of Mathematics, Stockholm University, S-10691, Stockholm, Sweden}
\email{smirnov@math.su.se}

%\bibliographystyle{amsplain}
%\tableofcontents
\maketitle

\begin{center}
{\textit{Dedicated to Professor Bernd Ulrich on the occasion of his 65th birthday}}
\end{center}

\begin{abstract}
In a formally unmixed Noetherian local ring, if the colength and multiplicity of an integrally closed ideal agree, then $R$ is regular. We deduce this using the relationship between multiplicity and various ideal closure operations.
\end{abstract}

\section{Introduction}

Let $(R, \mf m)$ be a Noetherian local ring, $I$ be an $\mf m$-primary ideal, and $M$ be a finitely generated $R$-module of dimension $d$.
The Hilbert--Samuel multiplicity of $M$ with respect to $I$ is defined as
\[
\eh(I, M) = \lim_{n \to \infty} \frac{d! \length (M/I^nM)}{n^{d}}.
\]
We simplify our notation by letting $\eh(I):=\eh(I, R)$ and $\eh(R) := \eh(\mf m)$. The importance of the Hilbert--Samuel multiplicity in the study of singularities comes from Nagata's fundamental theorem: a Noetherian local ring $(R,\m)$ is regular if and only if it is formally unmixed and $\eh(R) = 1$. An ideal-theoretic concept naturally associated to multiplicity is integral closure.
Under mild assumptions on $R$, for a pair of ideals $J \subseteq I$ we have equality $\eh(I) = \eh(J)$
if and only if $I \subseteq \overline{J}$.

In this short note, we further the relationship
between multiplicity and integral closure by showing that in a formally equidimensional ring
$\eh(I) \geq \length (R/\overline{I})$ and
characterizing that in a formally unmixed ring the equality
holds for {\it some} parameter ideal if and only if $R$ is regular.
The latter is a vast generalization of Nagata's theorem:
we view his statement as $\eh(\mf m) = \length (R/\mf m)$. These results are obtained by investigating the relationship between multiplicity and various closure operations of parameter ideals. Let $J$ be an ideal generated by a system of parameters of $R$. We have the following containments of ideal closure operations under mild assumptions:
\[
J \subseteq J^{\lim} \subseteq J^* (\text{in characteristic } p > 0) \subseteq \overline{J}.
\]

The equalities between the multiplicity and the colength of these closures encode special properties of $R$ (again, under mild assumptions of $R$):
\begin{enumerate}
\item $\eh(J) = \length (R/J)$ for all (or some) $J$ if and only if $R$ is Cohen--Macaulay;
\item $\eh(J) = \length(R/J^{\lim})$ for all (or some) $J$ if and only if $R$ is Cohen--Macaulay (Le--Nguyen \cite{CuongNhan}, Theorem~\ref{is CM});
\item $\eh(J)= \length(R/J^*)$ for all (or some) $J$ if and only if $R$ is F-rational (Goto--Nakamura \cite{GotoNakamura}, Corollary~\ref{is Frat});
\item $\eh(J) = \length(R/\overline{J})$ for some $J$ if and only if $R$ is regular (Corollary~\ref{is regular}).
 \end{enumerate}

We remark that, our main contribution, Corollary~\ref{is regular}, also follows from the main result of \cite{WatanabeChain}, {\it if $(R,\m)$ is an excellent normal domain with an algebraically closed residue field}.\footnote{As pointed out in \cite[Lemma 2.1]{HunekeMaPhamSmirnov}, Watanabe's result in \cite{WatanabeChain} can be generalized to complete local domain with an algebraically closed residue field.} The point is that, under these assumptions of $R$, $\eh(I)=\length(R/I)$ for an integrally closed $\m$-primary ideal $I$ implies $\eh(\m)=1$ by \cite[Theorem 2.1]{WatanabeChain} (using Theorem \ref{inequality thm}), and hence $R$ is regular by Nagata's theorem. However, we do not see how to extend this approach to get the full version of Corollary \ref{is regular}.

\vspace{1em}

\noindent\textbf{Acknowledgement}: The authors thank Craig Huneke and Bernd Ulrich for valuable discussions, and Jugal Verma for comments on a draft of this note. The first author is supported in part by NSF Grant DMS $\#1901672$, and was supported by NSF Grant DMS $\#1836867/1600198$ when preparing this article. The second author is supported by Ministry of Education and Training, grant no. B2018-HHT-02. Part of this work has been done during a visit of the third author to Purdue University supported by Stiftelsen G S Magnusons fond of Kungliga Vetenskapsakademien. Finally, we thank the referee for her/his comments.

\section{Colength and multiplicity}

The goal of this section is to prove Theorem \ref{inequality thm}. This theorem can be also deduced from the methods in the next section. But we give an elementary approach here that avoids the use of limit closure and big Cohen-Macaulay algebras.

We recall that a Noetherian local ring $(R,\m)$ is equidimensional (resp., unmixed) if $\dim R/P = \dim R$ for every
minimal (resp., associated) prime $P$ of $R$. In other words, $R$ is
unmixed if it is equidimensional and $(S_1)$.
We say that a Noetherian local ring $R$ is formally equidimensional (resp., unmixed)
if $\widehat{R}$ is equidimensional (resp., unmixed). For an ideal $I\subseteq R$ and an element $x\in R$ we use $I:x^\infty$ to denote $\cup_n (I:x^n)$.

\begin{definition}
Let $x_1, \ldots, x_t$ be a sequence of elements in a Noetherian local ring $R$. We define $(x_1, \ldots, x_t)^\infty$ inductively as follows:
\begin{enumerate}
\item $(x_1)^\infty = (x_1) + 0:x_1^\infty$ if $t = 1$
\item $(x_1, \ldots, x_t)^\infty = (x_t)+ (x_1, \ldots, x_{t-1})^\infty:x_t^\infty$ if $t > 1$.
\end{enumerate}
\end{definition}

\begin{example}
The reader should be warned that this is not a closure operation on ideals, and
the result may depend on the order of elements. Consider $R= k[[x^4, x^3y, xy^3, y^4]]$.
Then $x^4, y^4$ form a system of parameters, but
\[
(x^4, y^4)^\infty = (x^4, x^6y^2, y^4) \neq (x^4, x^2y^6, y^4) = (y^4, x^4)^\infty.
\]
\end{example}

We record the following properties.

\begin{lemma}
Let $(R, \mf m)$ be a Noetherian local ring of dimension $d$.
For any sequence $x_1, \ldots, x_d$, $(x_1, \ldots, x_d)^\infty$ is either $\mf m$-primary or the unit ideal.
\end{lemma}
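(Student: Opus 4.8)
The plan is to establish the stronger dimension estimate $\dim R/(x_1,\dots,x_t)^\infty \le d-t$ for every $1 \le t \le d$, by induction on $t$. Taking $t=d$ then forces $\dim R/(x_1,\dots,x_d)^\infty \le 0$, and an ideal $I$ in a Noetherian local ring $(R,\mf m)$ with $\dim R/I \le 0$ is either the unit ideal (when $R/I=0$) or $\mf m$-primary (when $R/I$ is a nonzero Artinian local ring, so that $\sqrt{I}=\mf m$). This is exactly the desired dichotomy.

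The engine of the induction is the following observation about a single element. For a Noetherian local ring $A$ and $y \in A$, set $A' = A/(0:_A y^\infty)$. If $\bar r\bar y = 0$ in $A'$, then $ry \in 0:_A y^\infty$, so $ry^{n+1}=0$ for some $n$, whence $r \in 0:_A y^\infty$ and $\bar r = 0$; thus the image of $y$ is a nonzerodivisor on $A'$. Since a nonzerodivisor lies outside every associated prime of $A'$, and in particular outside every minimal prime realizing $\dim A'$, we obtain $\dim A'/yA' \le \dim A' - 1 \le \dim A - 1$, reading $\dim 0 = -\infty$ to cover the case where $y$ is a unit. The point is that $A'/yA' = A/((y) + 0:_A y^\infty)$, so this records precisely that passing from $A$ to $A/((y)+0:_A y^\infty)$ drops dimension by at least one.

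To run the induction, write $I_t = (x_1,\dots,x_t)^\infty$ and apply the observation with $A = R/I_{t-1}$ and $y$ the image of $x_t$. Since $(I_{t-1}:x_t^\infty)/I_{t-1} = 0:_A y^\infty$, we get $R/I_t = A/((y)+0:_A y^\infty) = A'/yA'$, and hence $\dim R/I_t \le \dim R/I_{t-1} - 1$. The base case $t=1$ is the observation applied directly to $A=R$, $y=x_1$, giving $\dim R/I_1 \le d-1$; combining the displayed drop with the inductive hypothesis $\dim R/I_{t-1} \le d-(t-1)$ yields $\dim R/I_t \le d-t$, completing the induction.

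The only real content beyond bookkeeping is the nonzerodivisor step, and the point demanding care is the comparison $\dim A'/yA' \le \dim A - 1$: one must invoke that a nonzerodivisor avoids every associated prime (hence the minimal primes computing $\dim A$) to force the strict drop, while separately tracking the degenerate possibilities that $y$ is a unit or that the quotient is already zero, so the estimate stays valid once the chain of ideals reaches the unit ideal. Note that no hypothesis on the $x_i$—such as forming a system of parameters or lying in $\mf m$—is needed, since those degenerate cases simply land in the unit-ideal branch of the conclusion.
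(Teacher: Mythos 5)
Your proof is correct and takes essentially the same approach as the paper: both hinge on the observation that $x_t$ becomes a nonzerodivisor modulo the saturation $0:x_t^\infty$, forcing the dimension to drop by at least one at each stage, followed by induction. Your explicit estimate $\dim R/(x_1,\dots,x_t)^\infty \le d-t$ simply makes precise (and cleanly handles the degenerate cases of) the induction that the paper runs tersely.
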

\begin{proof}
If $0:x_1^{\infty}$ is a proper ideal, i.e., $x_1 \notin \sqrt{(0)}$,
then $x_1$ is a regular element modulo $0:x_1^{\infty}$.
Hence $\dim R/(x_1)^\infty < d$ and we are done by induction.
\end{proof}

\begin{lemma}
\label{lem.mult is length}
Let $(R, \mf m)$ be a Noetherian local ring of dimension $d > 0$ and $x_1, \ldots, x_d$ be a system of parameters.
Then $\eh((x_1, \ldots, x_d)) = \length (R/(x_1, \ldots, x_d)^\infty)$.
\end{lemma}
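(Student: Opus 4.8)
The plan is to prove the identity by induction on $d$, isolating a single ``peeling'' step that strips off one parameter at a time. For the base case $d=1$ I would argue by two elementary length computations. Set $M=0:_Rx_1^\infty$; since $M$ has finite length and $M\cap x_1R=x_1M$ (if $x_1r\in M$ then $x_1^{k+1}r=0$ for some $k$, so $r\in M$), the endomorphism $x_1\colon M\to M$ satisfies $\length(M/x_1M)=\length(0:_Mx_1)=\length(0:_Rx_1)$, whence
\[
\length\bigl(R/(x_1)^\infty\bigr)=\length(R/x_1R)-\length(0:_Rx_1).
\]
The right-hand side is also the eventual first difference of the Hilbert--Samuel function $n\mapsto\length(R/x_1^nR)$, that is $\eh(x_1)$. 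This settles $d=1$, and the same computation gives the formula $\eh(x_1;M)=\length(M/x_1M)-\length(0:_Mx_1)$ for every module $M$ with $\dim M\le 1$ and $x_1$ a parameter on $M$.

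The inductive step reduces to the single-step identity
\[
\eh\bigl((x_1,\dots,x_d)\bigr)=\eh\bigl((\overline{x_2},\dots,\overline{x_d});\,R/(x_1)^\infty\bigr),
\]
computed in $T:=R/(x_1)^\infty$. Here $\dim T=d-1$ (as in the proof of the preceding lemma, $x_1$ is a nonzerodivisor modulo $0:_Rx_1^\infty$), and $\overline{x_2},\dots,\overline{x_d}$ form a system of parameters on $T$. To prove the identity I would first kill the lower-dimensional obstruction: with $N=0:_Rx_1^\infty$ one has $\dim N\le\dim R/(x_1)=d-1$, so $\eh((x_1,\dots,x_d);N)=0$ and additivity of multiplicity along $0\to N\to R\to R/N\to 0$ gives $\eh((x_1,\dots,x_d))=\eh((x_1,\dots,x_d);R/N)$. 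In $R/N$ the element $x_1$ is a nonzerodivisor and $(R/N)/x_1(R/N)=R/((x_1)+N)=R/(x_1)^\infty=T$, so it remains to show that cutting by a nonzerodivisor parameter preserves the multiplicity.

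This nonzerodivisor reduction is the heart of the matter and the step I expect to be the main obstacle: for a general (non-superficial) parameter the colength $\length((J^{n+1}:x_1)/J^n)$ could a priori feed into the leading term of the Hilbert polynomial. I would bypass superficial elements---and hence any hypothesis on the residue field---by passing to Euler characteristics. By the theorem of Auslander--Buchsbaum and Serre, $\eh(\underline{y};M)=\chi(\underline{y};M)=\sum_i(-1)^i\length H_i(\underline{y};M)$ for any system of parameters $\underline{y}$, and for the nonzerodivisor $x_1$ the long exact Koszul sequence attached to $0\to S\xrightarrow{x_1}S\to S/x_1S\to 0$ (with $S=R/N$) expresses both $\chi(x_1,x_2,\dots,x_d;S)$ and $\chi(x_2,\dots,x_d;S/x_1S)$ as the same alternating sum $\sum_i(-1)^i\bigl(\length(h_i/x_1h_i)+\length(0:_{h_{i-1}}x_1)\bigr)$, where $h_i=H_i(x_2,\dots,x_d;S)$. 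Hence $\eh((x_1,\dots,x_d);S)=\eh((\overline{x_2},\dots,\overline{x_d});S/x_1S)$ with no further hypotheses, which completes the single-step identity.

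Finally I would close the induction by matching the closure operation with the quotient. Directly from the recursive definition, and because forming the colon by $x_j^\infty$ commutes with the projection $R\to R/(x_1)^\infty$, one checks by induction on the number of parameters that $(x_1,\dots,x_d)^\infty$ is exactly the preimage of $(\overline{x_2},\dots,\overline{x_d})^\infty$ computed in $T$; this is consistent with the order in which the definition builds up $(x_1,\dots,x_d)^\infty$, so the order-dependence exhibited in the Example causes no trouble. Thus $T/(\overline{x_2},\dots,\overline{x_d})^\infty=R/(x_1,\dots,x_d)^\infty$, and applying the induction hypothesis to $T$ gives $\eh((\overline{x_2},\dots,\overline{x_d});T)=\length\bigl(T/(\overline{x_2},\dots,\overline{x_d})^\infty\bigr)=\length\bigl(R/(x_1,\dots,x_d)^\infty\bigr)$. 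Combined with the single-step identity, this yields $\eh((x_1,\dots,x_d))=\length(R/(x_1,\dots,x_d)^\infty)$.
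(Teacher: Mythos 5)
Your proof is correct, and it closes the induction the same way the paper does --- peel off $x_1$, reduce to $T=R/(x_1)^\infty$ via the identity $\eh((x_1,\dots,x_d);R)=\eh((x_2,\dots,x_d);T)$, and identify $(x_1,\dots,x_d)^\infty$ with the preimage of $(\overline{x_2},\dots,\overline{x_d})^\infty$ --- but the key one-step identity is established by genuinely different means. The paper invokes Lech's associativity and additivity formulas: $\eh((x_1,\dots,x_d),R)=\sum_P \eh((x_2,\dots,x_d),R/P)\,\eh(x_1,R_P)$ over the appropriate minimal primes $P$ of $(x_1)$, then recognizes $\eh(x_1,R_P)=\length(R_P/(x_1)^\infty R_P)$ so that the sum collapses to $\eh((x_2,\dots,x_d),R/(x_1)^\infty)$. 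You instead kill $N=0:_Rx_1^\infty$ by additivity, and for the nonzerodivisor $x_1$ on $S=R/N$ you compare Koszul Euler characteristics via the Auslander--Buchsbaum--Serre theorem $\chi=\eh$, matching $\length H_i(x_1,\underline{x}';S)$ with $\length H_i(\underline{x}';S/x_1S)$ term by term through the two long exact sequences; in fact, since $x_1$ is a nonzerodivisor on $S$, the complex $K(x_1;S)$ resolves $S/x_1S$ and the two Koszul homologies are isomorphic, not merely of equal length. Both routes rest on classical multiplicity theory of comparable depth: the paper's is shorter because the associativity formula disposes of the zerodivisor issues in one stroke by localizing at the minimal primes of $(x_1)$, while yours is more homologically self-contained (no localization, no superficial elements --- though the paper's proof needs neither as well) and has the merit of making explicit two points the paper leaves implicit, namely the elementary length/first-difference computation in the base case $d=1$ (in place of the Cohen--Macaulayness of $R/(0:x_1^\infty)$) and the verification that $(x_1,\dots,x_d)^\infty$ is exactly the preimage of $(\overline{x_2},\dots,\overline{x_d})^\infty$ in $T$, which is what makes the induction legitimate despite the order-dependence of the operation $(\;)^\infty$.
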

\begin{proof}
Since $x_1$ is a parameter, it is not contained in any prime $\mf p$ of $\dim R/\mf p = d$, so $\dim (0:x_1^\infty) < d$.
Multiplicity is additive in short exact sequences, so $\eh((x_1, \ldots, x_d), R) = \eh((x_1, \ldots, x_d), R/0:x_1^\infty)$.
Because $x_1, \ldots, x_d$ is still a system of parameters on $R/0:x_1^\infty$
and $x_1$ is now a regular element, we have by \cite[Lemma~1]{Lech2}
\[
\eh((x_1, \ldots, x_d), R/0:x_1^\infty) = 
\eh((x_1, \ldots, x_d), R/(0:x_1^\infty, x_1)) = 
\eh((x_2, \ldots, x_d), R/(x_1)^\infty).
\]
The assertion now follows by induction on $d$. 
Note that for $d = 1$ the formula above gives that 
$\eh((x_1), R/0:x_1^\infty) = \length (R/(0:x_1^\infty, x_1)) = \length (R/(x_1)^\infty)$.
\end{proof}

\begin{remark}\label{extend rmk}
Let $(R, \mf m)$ be a Noetherian local ring and let $S = \widehat{R(t)} := \widehat{R[t]_{\mf m[t]}}$.
We note that $S$ is complete, has an infinite residue field,
and is a faithfully flat $R$-algebra such that $\mf m_RS$ is the maximal ideal of $S$.
It follows that $\length_R (R/I) = \length_S (S/IS)$ for every $\mf m$-primary ideal
and, thus, $\eh(I) = \eh(IS)$.
Moreover, if $I$ is integrally closed in $R$ then $IS$ is integrally closed in $S$. This
follows from \cite[Lemma~8.4.2 (9)]{SwansonHuneke}, which allows us to pass to $R(t)$,
and the fact that there is one-to-one correspondence between $\mf m$-primary ideals
in $R$ and $\widehat{R}$, so if $I\widehat{R}$ is a reduction of a larger ideal,
then $I$ is a reduction too.
\end{remark}

\begin{theorem}\label{inequality thm}
Let $(R, \mf m)$ be a formally equidimensional Noetherian local ring.
Then for every $\m$-primary integrally closed ideal $I$ we have $\eh(I) \geq \length (R/I)$.
\end{theorem}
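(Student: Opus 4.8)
The plan is to reduce to a complete local ring with infinite residue field, replace the integrally closed ideal $I$ by a minimal reduction $J$ generated by a system of parameters, and then apply Lemma \ref{lem.mult is length} to convert the multiplicity $\eh(J)$ into the colength $\length(R/J^\infty)$. The whole inequality then collapses to a single containment, $J^\infty \subseteq \overline{J}$.

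First I would invoke Remark \ref{extend rmk} to pass to $S = \widehat{R(t)}$. This preserves both sides of the desired inequality, since $\eh(IS) = \eh(I)$ and $\length(S/IS) = \length(R/I)$; it keeps $I$ integrally closed; it is formally equidimensional (being complete and equidimensional); and, crucially, it has infinite residue field. So I may assume $R$ is complete with infinite residue field. Because the residue field is infinite and $I$ is $\m$-primary, $I$ admits a minimal reduction $J = (x_1, \ldots, x_d)$ generated by a system of parameters. Since $J$ is a reduction of the integrally closed ideal $I$ we have $\overline{J} = \overline{I} = I$, and since reductions preserve multiplicity, $\eh(I) = \eh(J)$. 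Applying Lemma \ref{lem.mult is length} gives $\eh(I) = \eh(J) = \length(R/J^\infty)$. Thus the theorem reduces to $\length(R/J^\infty) \geq \length(R/I)$, which follows at once from the containment $J^\infty \subseteq I = \overline{J}$, as that makes $R/J^\infty$ surject onto $R/I$.

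The heart of the argument—and the step I expect to be the main obstacle—is the containment $J^\infty \subseteq \overline{J}$. Writing $J_t = (x_1, \ldots, x_t)$, I would prove by induction on $t$ that $J_t^\infty \subseteq \overline{J_t}$. For $t = 1$, the extra piece $0 : x_1^\infty$ lands in the nilradical, since $x_1$, being a parameter, avoids every minimal prime and hence is a nonzerodivisor modulo the nilradical; and nilpotents lie in every integral closure. For the inductive step, unwinding the definition gives $J_t^\infty = (x_t) + (J_{t-1}^\infty : x_t^\infty)$, and the inductive hypothesis $J_{t-1}^\infty \subseteq \overline{J_{t-1}}$ reduces matters to showing $\overline{J_{t-1}} : x_t^\infty \subseteq \overline{J_t}$. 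Here I would invoke the key input that in a formally equidimensional ring the integral closure $\overline{J_{t-1}}$ of an ideal generated by part of a system of parameters is unmixed of height $t-1$ (Ratliff's theorem). Since $x_1, \ldots, x_t$ is part of a system of parameters, $\hght J_t = t$, so $x_t$ lies in no height-$(t-1)$ prime containing $J_{t-1}$; in particular $x_t$ avoids every associated prime of $\overline{J_{t-1}}$ and is a nonzerodivisor modulo $\overline{J_{t-1}}$. This forces $\overline{J_{t-1}} : x_t^\infty = \overline{J_{t-1}} \subseteq \overline{J_t}$, completing the induction.

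The difficulty is thus isolated in the unmixedness of $\overline{J_{t-1}}$: this is exactly where formal equidimensionality is used, and it is a nontrivial theorem of Ratliff rather than a formal consequence of the valuative criterion, which only yields $v(b) \geq v(J_t) - n\,v(x_t)$ for an individual Rees valuation. Everything else is bookkeeping: the reduction to the complete, infinite-residue-field case and the conversion of multiplicity into colength via Lemma \ref{lem.mult is length}. I would take care to check that the unmixedness statement is robust under non-reducedness, which it is, since passing to $R_{\mathrm{red}}$ changes neither the heights of the relevant primes nor the integral closures involved; hence the argument applies verbatim in the possibly non-reduced formally equidimensional setting.
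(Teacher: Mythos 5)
Your proposal is correct and follows essentially the same route as the paper: pass to $\widehat{R(t)}$ to get an infinite residue field, take a minimal reduction $(x_1,\ldots,x_d)$ of $I$, convert $\eh(I)$ into $\length(R/(x_1,\ldots,x_d)^\infty)$ via Lemma~\ref{lem.mult is length}, and prove $(x_1,\ldots,x_d)^\infty \subseteq \overline{(x_1,\ldots,x_d)} = I$ by induction on the number of parameters. The only difference is cosmetic: the paper cites the colon-capturing form of Ratliff's theorem (\cite[Theorem~5.4.1]{SwansonHuneke}) for the key containment $\overline{(x_1,\ldots,x_{i-1})}:x_i^\infty \subseteq \overline{(x_1,\ldots,x_{i})}$, whereas you derive the same containment from the equivalent unmixedness form of Ratliff's theorem (all associated primes of $\overline{(x_1,\ldots,x_{i-1})}$ have height $i-1$, so $x_i$ is a nonzerodivisor modulo $\overline{(x_1,\ldots,x_{i-1})}$).
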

\begin{proof}
We may pass from $R$ to $R(t)$ without changing the colength and the integral closedness of $I$. Thus we assume that $R$ has an infinite residue field.
Let $(x_1, \ldots, x_d)$ be a minimal reduction of $I$. By Lemma \ref{lem.mult is length}, it is enough to show that
$(x_1, \ldots, x_d)^\infty \subseteq I$. This is a consequence of colon-capturing
(\cite{Ratliff}, \cite[Theorem~5.4.1]{SwansonHuneke}).
Namely, it is clear that $(x_1)^\infty = (x_1)+0:x_1^\infty \subseteq \overline{(x_1)}$,
and for $i > 1$  we can use induction to see that
\[
(x_1, \ldots, x_{i})^\infty
= (x_i)+ (x_1, \ldots, x_{i - 1})^\infty:x_i^\infty
\subseteq (x_i)+ \overline{(x_1, \ldots, x_{i - 1})}:x_i^\infty \subseteq \overline{(x_1, \ldots, x_{i})}. \qedhere
\]
\end{proof}

\begin{example}
The equidimensionality assumption in Theorem \ref{inequality thm} is necessary. Let $R=k[[x,y,z]]/(xy, xz)$ and consider the ideal $(x^n, y,z)$.
One can check that this ideal is integrally closed, has multiplicity $1$, and colength $n$.
\end{example}

\section{Limit closure, integral closure, and the main result}
In this section we study a relation between multiplicity and the colength of limit closure, and we prove our main result. As a byproduct of our methods, we also recover some results in \cite{CuongNhan} and \cite{GotoNakamura}.

\begin{definition}
Let $(R,\m)$ be a Noetherian local ring and let $x_1, \ldots, x_d$ be a system of parameters of $R$. The limit closure of $(x_1, \ldots, x_d)$ in $R$ is defined as
\[
(x_1, \ldots, x_d)^{\lim_R} = \bigcup_{n\geq 0} (x_1^{n+1}, \ldots, x_d^{n + 1}):_R (x_1\cdots x_d)^n.
\]
We will write $(x_1, \ldots, x_d)^{\lim}$ if $R$ is clear from the context.
\end{definition}

We note that $(x_1, \ldots, x_d)^{\lim}/(x_1, \ldots, x_d)$ is the kernel of the natural map $R/(x_1, \ldots, x_d) \to \lc_{\mf m}^d (R)$: since $\lc_{\mf m}^d (R)=\varinjlim_n\frac{R}{(x_1^n,\dots,x_d^n)}$ with connection map multiplication by $x_1\cdots x_d$, $\overline{z}\in R/(x_1, \ldots, x_d)$ maps to $0$ in $\lc_{\mf m}^d (R)$ if and only if $z(x_1\cdots x_d)^n\in (x_1^{n+1},\dots,x_d^{n+1})$ for some $n$, that is, $z\in (x_1, \ldots, x_d)^{\lim}$.
In particular, limit closure of an ideal generated by a system of parameters is independent of the choice of the generators. In general, limit closure is hard to study: Hochster's monomial conjecture/theorem simply says that $(x_1,\dots,x_d)^{\lim}$ is not the unit ideal. This was proved by Hochster in the equal characteristic case \cite{HochsterMonomial} and was proved by Andr\'{e} in mixed characteristic \cite{Andre}.

The next theorem is a crucial ingredient towards proving our main result. It follows from \cite[Theorem~3.1]{CuongNhan}. But we provide a different and simpler proof.

\begin{theorem}\label{is CM}
Let $(R, \mf m)$ be a Noetherian local ring of dimension $d$. Then for every system of parameters $x_1, \ldots, x_d$, we have
\[
\eh((x_1, \ldots, x_d)) \geq \length (R/(x_1, \ldots, x_d)^{\lim}).
\]
Moreover, if $R$ is unmixed and is a homomorphic image of a Cohen--Macaulay ring,
then the equality holds for one (equivalently, all) system of parameters if and only if $R$ is Cohen--Macaulay.
\end{theorem}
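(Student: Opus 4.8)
The plan is to deduce everything from Lemma~\ref{lem.mult is length}, which identifies $\eh(J)$ with $\length(R/J^\infty)$ for $J=(x_1,\dots,x_d)$, together with a comparison of the two ``closures'' $J^\infty$ and $J^{\lim}$. The first step is the containment $J^\infty\subseteq J^{\lim}$, valid for an arbitrary system of parameters. Granting it, Lemma~\ref{lem.mult is length} gives $\length(R/J^{\lim})\le \length(R/J^\infty)=\eh(J)$, which is the asserted inequality; moreover, since both ideals contain $J$ and $J^\infty\subseteq J^{\lim}$, equality of colengths is equivalent to $J^\infty=J^{\lim}$.

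To prove $J^\infty\subseteq J^{\lim}$ I would induct on the number of parameters, mirroring the colon-capturing bookkeeping from Theorem~\ref{inequality thm}. Writing an element of $(x_1,\dots,x_k)^\infty=(x_k)+(x_1,\dots,x_{k-1})^\infty:x_k^\infty$ as $x_kv+u$, the summand $x_kv$ already lies in $(x_1^{N+1},\dots,x_k^{N+1})$ after multiplication by $(x_1\cdots x_k)^N$, while for $u$ one has $ux_k^s\in(x_1,\dots,x_{k-1})^\infty$ for some $s$; feeding this into the inductive hypothesis and multiplying by a suitable high power of $x_1\cdots x_{k}$ places $u(x_1\cdots x_k)^N$ inside $(x_1^{N+1},\dots,x_k^{N+1}):(x_{k+1}\cdots x_d)^\infty$. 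At $k=d$ the trailing colon is trivial, and the resulting membership $z(x_1\cdots x_d)^N\in(x_1^{N+1},\dots,x_d^{N+1})$ is exactly the defining condition for $J^{\lim}$.

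For the characterization, one direction is immediate: if $R$ is Cohen--Macaulay then every system of parameters is a regular sequence, so $J^\infty=J=J^{\lim}$ (the map $R/J\to\lc_{\mf m}^d(R)$ is injective) and $\eh(J)=\length(R/J)$; thus equality holds for every $J$. For the converse I would show that the defect $\length(J^{\lim}/J^\infty)=\eh(J)-\length(R/J^{\lim})$ is strictly positive once $\operatorname{depth}R<d$. Using that $R$ is unmixed we have $\lc_{\mf m}^0(R)=0$, and if $R$ is not Cohen--Macaulay then $\lc_{\mf m}^t(R)\ne0$ for $t=\operatorname{depth}R<d$. Since $R$ is a homomorphic image of a Cohen--Macaulay ring it has a canonical module $\omega$ and satisfies local duality, and each $\lc_{\mf m}^i(R)$ is Artinian, hence has nonzero socle when nonzero. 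Combining the identification $J^{\lim}/J=\ker(R/J\to\lc_{\mf m}^d(R))$ with the Matlis dual of this map, $\omega\to(R/J)^\vee$, I would produce from the socle of the lowest nonvanishing $\lc_{\mf m}^t(R)$ an element of $J^{\lim}\setminus J^\infty$, forcing the defect to be positive. Hence equality forces $\lc_{\mf m}^i(R)=0$ for all $i<d$, i.e. $R$ is Cohen--Macaulay; this also reproves \cite[Theorem~3.1]{CuongNhan}.

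The hard part is this last step: extracting Cohen--Macaulayness from the vanishing of the defect for a single---possibly shallow---system of parameters, with no generalized Cohen--Macaulay hypothesis. When the lower local cohomology modules have finite length the defect is exactly $\sum_{i=0}^{d-1}\binom{d-1}{i}\length(\lc_{\mf m}^i(R))$ and the conclusion is transparent; but unmixedness does not force finite length of $\lc_{\mf m}^i(R)$ once $d\ge3$, so the subtle point is to see that the defect of even one system of parameters still detects the socle of each nonvanishing lower local cohomology module. The local-duality construction above is what I would use to sidestep any finiteness assumption, since it manufactures the obstructing element directly from a socle element.
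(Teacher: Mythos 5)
Your derivation of the first assertion is correct and genuinely different from the paper's. The paper proves $\eh(J)\geq\length(R/J^{\lim})$ via Lech's formula, filtering $R/(x_1^n,\ldots,x_d^n)$ by $n^d$ monomial ideals and showing each factor surjects onto $R/J^{\lim}$; you instead combine Lemma~\ref{lem.mult is length} with the containment $J^\infty\subseteq J^{\lim}$. That containment is true, and your induction goes through, although the trailing colon $:(x_{k+1}\cdots x_d)^\infty$ in your inductive statement is unnecessary and does not even propagate correctly (at stage $k$ you can only multiply by powers of $x_k$, not of $x_{k+1},\ldots,x_d$); the clean claim is that $ux_k^s(x_1\cdots x_{k-1})^n\in(x_1^{n+1},\ldots,x_{k-1}^{n+1})$ forces $u(x_1\cdots x_k)^N\in(x_1^{N+1},\ldots,x_k^{N+1})$ for $N\geq\max(n,s)$, since $(x_1^{n+1},\ldots,x_{k-1}^{n+1})(x_1\cdots x_{k-1})^{N-n}\subseteq(x_1^{N+1},\ldots,x_{k-1}^{N+1})$. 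Your observation that equality is then equivalent to $J^\infty=J^{\lim}$, and the direction ``Cohen--Macaulay implies equality,'' are both fine. This part of your proposal buys something the paper's filtration argument does not: an ideal-theoretic reformulation of the equality condition.

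The genuine gap is the converse, which is the entire content of the theorem (it is Le--Nguyen's result \cite{CuongNhan}, and the paper's reproof of it occupies most of its argument). You reduce it to: if $\operatorname{depth}R<d$, then $J^{\lim}\setminus J^\infty$ is nonempty for the given system of parameters, and you propose to ``produce'' such an element from a socle element of the lowest nonvanishing $\lc^t_{\mf m}(R)$ via local duality --- but no construction is given, and none is evident. The module $J^{\lim}/J$ is the kernel of $R/J\to\lc^d_{\mf m}(R)$, so it only sees the top local cohomology; relating it to $\lc^t_{\mf m}(R)$ for $t<d$, for a \emph{single, arbitrary} system of parameters and with no finiteness hypothesis on the intermediate cohomologies, is exactly where all the difficulty lies. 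Worse, $J^\infty$ is not a cohomological object at all (it depends on the order of the $x_i$), so a duality argument has no handle on the condition ``not in $J^\infty$.'' The paper instead runs an induction on dimension: it cuts by a general element $z\in(x_1,\ldots,x_d)$ using \cite[Proposition~4.16]{CuongQuy}, passes to $S=(R/zR)/\lc^0_{\mf m}(R/zR)$, uses the dimension-two case \cite[Theorem~1.5]{limitclosure} as the base, compares limit closures of $R$ and $R/zR$ via \cite[Theorem~1.2 and Proposition~2.7]{limitclosure}, and finishes with a local cohomology exact-sequence argument; nothing playing the role of these steps appears in your sketch. Two secondary flaws: a homomorphic image of a Cohen--Macaulay local ring need not admit a canonical module or local duality (for that one needs an image of a \emph{Gorenstein} ring), and the binomial defect formula you invoke as motivation is valid only for standard systems of parameters in generalized Cohen--Macaulay rings, so it cannot serve even as a starting point for one arbitrary system of parameters.
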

\begin{proof}
The first assertion is well-known (for example, see \cite[Lemma~2.3]{NNN}). The point is that, by Lech's formula \cite{Lech}, $\eh((x_1,\dots,x_d))=\lim_{n\to\infty}\frac{\length(R/(x_1^n,\dots,x_d^n))}{n^d}$. We can filter $R/(x_1^n,\dots,x_d^n)$ by $n^d$ ideals generated by monomials in $x_1,\dots,x_d$, and it is easy to check that each factor maps onto $R/(x_1,\dots,x_d)^{\lim}$.\footnote{For example, if $d=1$, the we have a filtration $(x_1^n)\subseteq(x_1^{n-1})\subseteq\cdots\subseteq (x_1)\subseteq R$, the $i$-th factor ${(x_1^i)}/{(x_1^{i+1})}\cong R/(x_1^{i+1}:x_1^i)$, since $(x_1^{i+1}:x_1^i)\subseteq (x_1)^{\lim}$ by definition, ${(x_1^i)}/{(x_1^{i+1})} \twoheadrightarrow R/(x_1)^{\lim}$. In the general case, each factor looks like $({J, x_1^{n_1}\cdots x_d^{n_d}})/{J}\cong R/(J:x_1^{n_1}\cdots x_d^{n_d})$ where $J$ is an $\m$-primary ideal generated by monomials $x_1^{j_1}\cdots x_d^{j_d}$ in $x_1,\dots,x_d$ such that $j_i>n_i$ for some $i$, i.e., at least one exponent is bigger than that appearing in $x_1^{n_1}\cdots x_d^{n_d}$. Now for every $y\in J:(x_1^{n_1}\cdots x_d^{n_d})$, we have $yx_1^{n_1}\cdots x_d^{n_d}=\sum a_{j_1\dots j_d}x_1^{j_1}\cdots x_d^{j_d}$. Pick $n$ that is larger than all $n_i$ and multiply this equation by $x_1^{n-n_1}\cdots x_d^{n-n_d}$ we get $y(x_1\cdots x_d)^n=\sum a_{j_1\dots j_d}x_1^{j_1+n-n_1}\cdots x_d^{j_d+n-n_d}\in (x_1^{n+1},\dots,x_d^{n+1})$ by the assumptions on $j_i$. Hence $y\in (x_1,\dots,x_d)^{\lim}$ and thus $R/(J:x_1^{n_1}\cdots x_d^{n_d})\twoheadrightarrow R/(x_1,\dots,x_d)^{\lim}$.}

Now we prove the second assertion. We may assume the residue field of $R$ is infinite. We proceed by induction on $d$. If $d=1$ the assertion is obvious. If $d=2$, the statement follows from \cite[Theorem~1.5]{limitclosure}.\footnote{Note that the ``unmixed" assumption in \cite[Theorem~1.5]{limitclosure} means formally unmixed in our context, and if $R$ is a homomorphic image of a Cohen--Macaulay ring, then $R$ is unmixed implies $R$ is formally unmixed \cite[Theorem 2.1.15]{BrunsHerzog}.} Now we assume $d\geq 3$, it follows from \cite[Proposition~4.16]{CuongQuy} that
if $z \in (x_1,\dots,x_d)$ is general, then $R':=R/zR$ is equidimensional and $(S_1)$ on the punctured spectrum.
Let $S = R'/\lc^0_{\mf m}(R')$. We know that $S$ is unmixed.
Since $\lc^0_{\mf m}(R')$ has finite length and $z$ is a general element in $(x_1,\dots,x_d)$, we have
$$\eh((x_1, \ldots, x_d), S) = \eh((x_1, \ldots, x_d), R') = \eh((x_1, \ldots, x_d)).$$
Replacing $x_1,\dots, x_{d-1}$ if necessary, we may assume that $x_1, \ldots, x_{d-1}, z$ form a system of parameters of $R$, and thus $x_1,\dots,x_{d-1}$ form a system of parameters on $R'$ and $S$.
By \cite[Theorem~1.2 and Proposition~2.7]{limitclosure} , we know that
$(x_1, \ldots, x_{d-1})^{\lim_{R'}}S = (x_1, \ldots, x_{d-1})^{\lim_{S}}$.
Moreover,
if $r\in (x_1^{n+1}, \ldots, x_{d - 1}^{n+1}, z) : (x_1\cdots x_{d-1})^n$,
then
\[r  (x_1\cdots x_{d-1}z)^n \subseteq z^n (x_1^{n+1}, \ldots, x_{d - 1}^{n+1}, z)
\subseteq (x_1^{n+1}, \ldots, x_{d - 1}^{n+1}, z^{n+1}).
\]
This implies that the pre-image of $(x_1,\dots,x_{d-1})^{\lim_{R'}}$ in $R$ is contained in $(x_1,\dots,x_d)^{\lim_R}$. Thus we have
\begin{align*}
\eh((x_1, \ldots, x_d), S) &\geq \length (S/(x_1, \ldots, x_{d-1})^{\lim_S})
= \length (R'/(x_1, \ldots, x_{d-1})^{\lim_{R'}})
\\ &\geq \length (R/(x_1, \ldots, x_d)^{\lim_R}) = \eh((x_1, \ldots, x_d)) = \eh((x_1, \ldots, x_d), S)
\end{align*}
and so we must have equalities all over.
Therefore $S$ is Cohen--Macaulay by the induction hypothesis, and
it follows that $\lc^i_{\mf m} (R') \cong \lc^i_{\mf m}(S) = 0$ for $0 < i < \dim S=\dim R'$.

Finally, since $R$ is unmixed, $z$ is a regular element, so the sequence
\[
0 \to R \xrightarrow{\times z} R \to R'=R/zR \to 0
\]
is exact and induces the exact sequence
\[
0 \to \lc^0_{\mf m} (R/zR) \to \lc^1_{\mf m} (R) \xrightarrow{\times z} \lc^1_{\mf m} (R) \to 0.
\]
Because $R$ is unmixed, $\lc^1_{\mf m} (R)$ has finite length. The sequence above then implies that
$\lc^0_{\mf m} (R/zR) = 0$. Thus $R/zR$ is Cohen--Macaulay, so $R$ is Cohen--Macaulay.
\end{proof}

Using limit closure we recover the main result of \cite[Theorem 1.2]{GotoNakamura}, see also \cite[Corollary 1.9 and Remark 1.10]{CiupercaEnescu}.

\begin{corollary}\label{is Frat}
Let $(R, \mf m)$ be an equidimensional Noetherian local ring of characteristic $p > 0$
which is a homomorphic image of a Cohen--Macaulay ring.
Then for any system of parameters $x_1, \ldots, x_d$ of $R$ we have
$\eh((x_1, \ldots, x_d)) \geq \length (R/(x_1, \ldots, x_d)^*)$.
Moreover, if, in addition, $R$ is unmixed, then
the equality holds for one (equivalently, all) system of parameters if and only if $R$ is F-rational.
\end{corollary}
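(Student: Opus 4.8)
The plan is to reduce the entire statement to Theorem~\ref{is CM} together with the containment $J^{\lim}\subseteq J^{*}$ already recorded in the introduction, so that the only input special to characteristic $p$ is the behaviour of tight closure of parameter ideals in a Cohen--Macaulay ring. Throughout write $J=(x_1,\dots,x_d)$.

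For the inequality, I would first note that since $R$ is equidimensional and a homomorphic image of a Cohen--Macaulay ring, the relevant instance of the displayed chain gives $J^{\lim}\subseteq J^{*}$; concretely, an element of $J^{\lim}$ maps to $0$ in $\lc^{d}_{\mf m}(R)$, which a fortiori lands in $0^{*}_{\lc^{d}_{\mf m}(R)}$, the submodule corresponding to $J^{*}$ (this is colon-capturing for tight closure). Hence $\length(R/J^{*})\le \length(R/J^{\lim})$, and the first assertion of Theorem~\ref{is CM} yields
\[
\eh(J)\ \ge\ \length(R/J^{\lim})\ \ge\ \length(R/J^{*}).
\]

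For the characterization assume in addition that $R$ is unmixed. Suppose first that $R$ is F-rational. Since $R$ is a homomorphic image of a Cohen--Macaulay ring, F-rationality forces $R$ to be Cohen--Macaulay; then $x_1,\dots,x_d$ is a regular sequence, so $(x_1^{n+1},\dots,x_d^{n+1}):(x_1\cdots x_d)^{n}=J$ for every $n$ and thus $J^{\lim}=J$, while also $\eh(J)=\length(R/J)$. As F-rationality gives $J^{*}=J$, we obtain $\eh(J)=\length(R/J)=\length(R/J^{*})$ for every system of parameters. Conversely, suppose $\eh(J)=\length(R/J^{*})$ for some $J$. Then the chain above collapses to equalities, so in particular $\eh(J)=\length(R/J^{\lim})$, and Theorem~\ref{is CM} (here I use that $R$ is unmixed and a homomorphic image of a Cohen--Macaulay ring) forces $R$ to be Cohen--Macaulay. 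As before $J^{\lim}=J$ in this case, whence $\length(R/J)=\length(R/J^{*})$ and therefore $J=J^{*}$, since $J\subseteq J^{*}$. Thus $R$ is Cohen--Macaulay and possesses a tightly closed parameter ideal, and the standard fact that for such rings one tightly closed parameter ideal forces every parameter ideal to be tightly closed then shows $R$ is F-rational.

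The conceptual heavy lifting is performed entirely by Theorem~\ref{is CM}, which manufactures Cohen--Macaulayness from the single multiplicity equality; once that is in hand the remaining steps are formal manipulations with regular sequences and with $J\subseteq J^{*}$. The one external ingredient I would need to pin down, and would cite rather than reprove, is the equivalence for a Cohen--Macaulay homomorphic image of a Cohen--Macaulay ring between \emph{some} parameter ideal being tightly closed and full F-rationality; I expect the final step of the $(\Rightarrow)$ direction, upgrading a single tightly closed parameter ideal to F-rationality, to be where care is genuinely required.
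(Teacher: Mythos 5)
Your proposal is correct and matches the paper's proof in essentially every step: both derive the inequality from Theorem~\ref{is CM} together with the colon-capturing containment $(x_1,\dots,x_d)^{\lim}\subseteq(x_1,\dots,x_d)^*$, and both settle the equality case by using Theorem~\ref{is CM} to extract Cohen--Macaulayness, deducing $J=J^*$ from $\eh(J)=\length(R/J)$, and then invoking precisely the ``standard fact'' you defer to (the paper cites \cite[Proposition~2.2]{FedderWatanabe}) that a Cohen--Macaulay ring with one tightly closed parameter ideal is F-rational. The only cosmetic difference is that you also spell out the easy direction (F-rational implies equality for all parameter ideals), which the paper leaves implicit.
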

\begin{proof}
The first assertion follows from Theorem \ref{is CM} and colon-capturing: $(x_1, \ldots, x_d)^{\lim} \subseteq (x_1, \ldots, x_d)^*$, see \cite[Theorem 2.3 and Remark~5.4]{Huneke}.
If $R$ is unmixed and equality holds, then by Theorem~\ref{is CM}, $R$ is Cohen--Macaulay and thus $\eh((x_1, \ldots, x_d))=\length(R/(x_1,\dots,x_d))$. Hence $(x_1, \ldots, x_d)=(x_1,\dots,x_d)^*$, so $R$ is F-rational by \cite[Proposition~2.2]{FedderWatanabe}.
\end{proof}

We next show that limit closure is contained in the integral closure in all characteristics using the existence of big Cohen--Macaulay algebras.

\begin{theorem}\label{homo limit}
Let $(R, \mf m)$ be a formally equidimensional Noetherian local ring, then for every system of parameters $x_1,\ldots,x_d$ we have
\[
(x_1, \ldots, x_d)^{\lim} \subseteq \overline{(x_1, \ldots, x_d)}.
\]
\end{theorem}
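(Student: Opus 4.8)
The plan is to reduce to the case of a complete local domain and then exploit a big Cohen--Macaulay algebra, in which $x_1,\ldots,x_d$ forms a regular sequence and the limit closure collapses to the ideal itself.

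First I would reduce to the complete case. Since $R$ is formally equidimensional, $\widehat R$ is equidimensional; limit closure commutes with the faithfully flat map $R\to\widehat R$ (the colons and the increasing union are preserved), and by the base-change behaviour of integral closure recorded in Remark \ref{extend rmk} it suffices to prove the containment after completion. Next I would reduce to a domain: for the complete equidimensional ring $\widehat R$ with minimal primes $P_1,\ldots,P_s$, each $\dim\widehat R/P_j=\dim\widehat R$, so $x_1,\ldots,x_d$ remains a system of parameters in every $\widehat R/P_j$. The limit-closure relation $z(x_1\cdots x_d)^n\in(x_1^{n+1},\ldots,x_d^{n+1})$ descends to each quotient $\widehat R/P_j$, and the integral closure of an ideal in a reduced ring is detected on the quotients by its minimal primes; thus it is enough to treat a complete local domain.

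So assume $R$ is a complete local domain and let $z\in(x_1,\ldots,x_d)^{\lim}$, say $z(x_1\cdots x_d)^n\in(x_1^{n+1},\ldots,x_d^{n+1})$. Let $B$ be a big Cohen--Macaulay $R$-algebra; such algebras exist in all characteristics (Hochster--Huneke in equal characteristic, Andr\'e in mixed characteristic). Since $x_1,\ldots,x_d$ is a regular sequence on $B$, the limit closure of $(x_1,\ldots,x_d)$ computed in $B$ is just $(x_1,\ldots,x_d)B$: for a regular sequence one checks by induction on $d$ that $(x_1^{n+1},\ldots,x_d^{n+1})B:_B(x_1\cdots x_d)^n=(x_1,\ldots,x_d)B$. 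Hence the displayed relation forces $z\in(x_1,\ldots,x_d)B$, that is, $z\in IB\cap R$ where $I=(x_1,\ldots,x_d)$.

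The crux is the descent $IB\cap R\subseteq\overline I$, and this is where I expect the real work. The clean case is when $B$ can be chosen integral over $R$: in positive and mixed characteristic one takes $B=R^+$ (big Cohen--Macaulayness of $R^+$ being Hochster--Huneke, respectively Bhatt), and then $z\in IB$ already lies in $IR'$ for some module-finite subextension $R\subseteq R'\subseteq B$, so $zR'\subseteq IR'$ and the determinantal trick applied to the faithful finitely generated module $R'$ gives $z\in\overline I$. The genuinely delicate point is equal characteristic $0$, where $R^+$ is not Cohen--Macaulay and no integral big Cohen--Macaulay algebra is available. I would handle this either by reducing the containment to characteristic $p$ (membership in the integral closure of an ideal is detected by reduction modulo $p$), or, uniformly in all characteristics, by the valuative criterion together with weak functoriality of big Cohen--Macaulay algebras: for each Rees valuation $V$ of $I$ --- a discrete valuation ring dominating $R$ --- weak functoriality produces a big Cohen--Macaulay $V$-algebra $C$ and a compatible map $B\to C$; pushing $z\in IB$ into $C$ and using that a uniformizer of $V$ is a nonzerodivisor on $C$ while $\m_V C\neq C$ forces $v(z)\geq v(I)$, hence $z\in IV$, and letting $V$ range over the Rees valuations yields $z\in\overline I$. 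The main obstacle throughout is thus not the regular-sequence bookkeeping but arranging a suitable big Cohen--Macaulay algebra and making the descent to the integral closure work uniformly across all characteristics.
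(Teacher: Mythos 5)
Your proposal is correct, and it follows the paper's own skeleton exactly up to the crux: reduce to a complete local domain (completion, then checking integral closure modulo each minimal prime, where equidimensionality keeps $x_1,\ldots,x_d$ a system of parameters), then use a big Cohen--Macaulay algebra $B$ and the collapse of the colon ideal for a regular sequence to get $(x_1,\ldots,x_d)^{\lim}\subseteq IB\cap R$ with $I=(x_1,\ldots,x_d)$. Where you genuinely diverge is the descent step $IB\cap R\subseteq\overline{I}$, which you correctly identify as the real issue. The paper disposes of it in one stroke via solid closure: a big Cohen--Macaulay algebra over a complete local domain is a solid $R$-algebra (Hochster, Corollary 10.6 of the solid closure paper), and solid closure is contained in integral closure (Hochster, Theorem 5.10); this requires only the \emph{existence} of one big Cohen--Macaulay algebra. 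Your uniform argument instead runs through the valuative criterion: for each Rees valuation $V$ of $I$, weak functoriality gives a big Cohen--Macaulay $\widehat{V}$-algebra $C$ compatible with a big Cohen--Macaulay $R$-algebra, and the nonzerodivisor/$\mf m_V C\neq C$ trick forces $v(z)\geq v(I)$, hence $z\in IV$ for all $V$, hence $z\in\overline{I}$. This argument is sound (and note you never need to fix $B$ in advance: you may simply run the regular-sequence step inside the algebra $B_V$ that weak functoriality produces for each $V$), and it is arguably more transparent; but it consumes a strictly stronger input, namely weak functoriality of big Cohen--Macaulay algebras (Hochster--Huneke in equal characteristic, Andr\'e in mixed characteristic), rather than existence plus Hochster's solid closure theory.

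Two caveats, neither fatal because your valuative route covers all characteristics. First, your ``clean case'' in mixed characteristic is imprecise: Bhatt's theorem is that the \emph{$p$-adic completion} of $R^+$ is a balanced big Cohen--Macaulay algebra, not $R^+$ itself (for parameters not involving $p$, the regular sequence property on $R^+$ is not what he proves), and the completed algebra is no longer integral over $R$, so the determinantal trick does not apply to it; in characteristic $p>0$ your $R^+$ argument is fine and is the classical proof that plus closure lies inside integral closure. Second, ``reduction to characteristic $p$'' for equal characteristic zero is only a slogan, and making it precise is substantial work, so your proof should be read as resting on the valuative/weak-functoriality argument. Finally, a small point: detection of integral closure modulo minimal primes holds in any Noetherian ring, with no reducedness hypothesis --- which is just as well, since $\widehat{R}$ need not be reduced here.
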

\begin{proof}
We may assume that $R$ is complete.
To check whether an element is in the integral closure, it is enough to check this modulo every minimal prime of $R$.
Since $R$ is equidimensional, $x_1, \ldots,x_d$ is still a system of parameters modulo every minimal prime of $R$.
So if $r$ is in $(x_1, \ldots, x_d)^{\lim}$, then this is also true modulo every minimal prime of $R$.
Therefore we reduce to the case that $R$ is a complete local domain.

Now let $B$ be a big Cohen--Macaulay $R$-algebra, whose existence follows from \cite{HochsterHunekeBig} and \cite{HochsterHunekeApplicationsofBigCM} in equal characteristic, and from \cite{Andre} (see also \cite{HeitmannMa}) in mixed characteristic. If
$r \in (x_1, \ldots, x_d)^{\lim}$, then
$r\in (x_1^t, \ldots, x_d^t):_R(x_1\cdots x_d)^{t-1}$ for some $t$. It follows that
\[r \in \left((x_1^t, \ldots, x_d^t):_B(x_1\cdots x_d)^{t-1} \right)\cap R= (x_1,\ldots ,x_d)B\cap R,\]
since $x_1,\ldots,x_d$ is a regular sequence on $B$.

Thus it is enough to prove that  $(x_1,\ldots,x_d)B\cap R$ is contained in $\overline{(x_1,\ldots,x_d)}$.
In fact, $JB\cap R$ is contained in $\overline{J}$ for every ideal $J$ of $R$: since $R$ is a complete local domain and $B$ is a big Cohen--Macaulay algebra, $B$ is a solid $R$-algebra in the sense of \cite[Corollary~10.6]{Hochster}, thus $JB\cap R$ is contained in the solid closure of $J$, but solid closure is always contained in the integral closure by \cite[Theorem~5.10]{Hochster}.
\end{proof}

\begin{corollary}\label{is regular}
Let $(R, \mf m)$ be a Noetherian local ring that is formally equidimensional. Then for every $\m$-primary integrally closed ideal $I$, we have $\eh(I) \geq \length (R/I)$. Moreover, if, in addition, $R$ is formally unmixed and equality holds for some $I$, then $R$ is regular.
\end{corollary}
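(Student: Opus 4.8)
The plan is to treat the two assertions separately. The inequality $\eh(I) \ge \length(R/I)$ is exactly Theorem~\ref{inequality thm}, so only the regularity statement requires work, and for that I would pit the two preceding theorems against each other on a minimal reduction of $I$. First I would reduce to the case that $R$ is complete with infinite residue field: by Remark~\ref{extend rmk} passing to $\widehat{R(t)}$ changes neither $\length(R/I)$, nor $\eh(I)$, nor the integral closedness of $I$, and it preserves formal unmixedness (the extension being flat with regular fibres); moreover a complete local ring is a homomorphic image of a regular, hence Cohen--Macaulay, ring by Cohen's structure theorem. Thus I may assume $R$ is complete, unmixed, a homomorphic image of a Cohen--Macaulay ring, and has infinite residue field, so that $I$ admits a minimal reduction $J = (x_1, \ldots, x_d)$ with $\overline{J} = \overline{I} = I$ and $\eh(J) = \eh(I)$.

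Next I would chain the known estimates. Combining Theorem~\ref{is CM} with the inclusion $J^{\lim} \subseteq \overline{J}$ of Theorem~\ref{homo limit} gives
\[
\eh(I) = \eh(J) \ge \length(R/J^{\lim}) \ge \length(R/\overline{J}) = \length(R/I).
\]
Under the equality hypothesis $\eh(I) = \length(R/I)$ every inequality here is forced to be an equality. The equality $\eh(J) = \length(R/J^{\lim})$ together with the equality case of Theorem~\ref{is CM} shows that $R$ is Cohen--Macaulay, while the equality $\length(R/J^{\lim}) = \length(R/\overline{J})$ forces $J^{\lim} = \overline{J} = I$. Since in a Cohen--Macaulay ring a system of parameters is a regular sequence and hence $J^{\lim} = J$, I conclude that $I = J$ is an \emph{integrally closed parameter ideal}.

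It therefore remains to prove the purely ideal-theoretic statement that a Cohen--Macaulay local ring possessing an integrally closed parameter ideal $Q = (x_1, \ldots, x_d)$ must be regular; by Nagata's theorem it suffices to show $\eh(\m) = 1$. I would argue by induction on $d$. When $d = 1$ the argument is clean: any nilpotent $z$ satisfies $z^k = 0 \in (x_1)^k$, so $\sqrt{0} \subseteq \overline{(x_1)} = (x_1)$, and writing elements of $\sqrt 0$ as multiples of the nonzerodivisor $x_1$ yields $\sqrt 0 = x_1 \sqrt 0$, whence $\sqrt 0 = 0$ by Nakayama, so $R$ is reduced; passing to the module-finite normalization $\overline{R}$, the relation $x_1 \overline{R} \cap R = \overline{(x_1)} = (x_1)$ makes multiplication by $x_1$ injective on the finite-length module $\overline{R}/R$, hence bijective, so $\overline{R} = R + x_1\overline{R}$ and Nakayama gives $\overline{R} = R$, i.e. $R$ is a discrete valuation ring. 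For $d > 1$ I would pass to $R' = R/(x_1)$ for a general $x_1$, again complete and Cohen--Macaulay of dimension $d-1$, and attempt to descend the integral closedness of $Q$ to $(x_2, \ldots, x_d)R'$ in order to apply the induction hypothesis; the regularity of $R'$ would then lift to $R$, since $\m$ would be generated by $x_1$ and lifts of a regular system of parameters of $R'$, forcing $\numg(\m) \le d$.

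The main obstacle is precisely this descent: integral closedness does not pass to quotients, even by a general nonzerodivisor, so $(x_2, \ldots, x_d)R'$ need not be integrally closed in $R'$. Note that inducting directly on the colength--multiplicity equality instead becomes circular, because equality in $R'$ is itself equivalent to the integral closedness one is trying to establish. I therefore expect the resolution to require either a careful analysis of superficial (general) elements for the integral closure filtration of $Q$, controlling how $\overline{Q}$ specializes modulo a general parameter, or else an appeal to the known result that a Cohen--Macaulay local ring admitting an integrally closed parameter ideal is regular; in either case this specialization of integral closure is the technical heart of the argument.
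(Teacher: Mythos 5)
Your proposal follows the paper's proof almost verbatim: the same reduction to a complete unmixed ring with infinite residue field via Remark~\ref{extend rmk}, the same chain
\[
\eh(I) = \eh(J) \geq \length(R/J^{\lim}) \geq \length(R/I)
\]
obtained by combining Theorem~\ref{is CM} with Theorem~\ref{homo limit}, and the same analysis of the equality case, which forces $R$ to be Cohen--Macaulay and $I = J$ to be an integrally closed parameter ideal. The only divergence is the endgame. The paper finishes at exactly this point by citing \cite[Corollary~2.5]{Goto}, which says that such a ring must be regular --- precisely the ``known result'' you name as your fallback option, so if you take that option your proof is complete and coincides with the paper's. Your attempt to reprove that result by induction, however, does contain a genuine gap for $d \geq 2$: as you yourself observe, integral closedness does not specialize modulo a general parameter, and nothing in your sketch overcomes this; the $d = 1$ case (reducedness, then the normalization argument via $\overline{(x_1)} = x_1\overline{R} \cap R$ and Nakayama) is correct but does not extend by the route you describe. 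So the honest summary is: your proof is correct and is the paper's proof provided you invoke Goto's theorem at the last step, while the self-contained alternative you sketch remains incomplete at exactly the step you flag as the technical heart.
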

\begin{proof}
We may assume that $R$ is complete with an infinite residue field by Remark \ref{extend rmk}.
Let $(x_1, \ldots, x_d)$ be a minimal reduction of $I$. By Theorem \ref{is CM} and Theorem~\ref{homo limit},
$$\eh(I)=\eh ((x_1, \ldots, x_d)) \geq \length (R/(x_1, \ldots, x_d)^{\lim})\geq \length(R/I).$$
Now if $R$ is formally unmixed and $\eh(I)=\length(R/I)$, then $\eh ((x_1, \ldots, x_d)) =\length (R/(x_1, \ldots, x_d)^{\lim})$ so by Theorem~\ref{is CM}, $R$ is Cohen--Macaulay. But then $\eh ((x_1, \ldots, x_d))=\length(R/(x_1,\dots,x_d))$ and hence $I = (x_1, \ldots, x_d)$, so $R$ is regular by \cite[Corollary~2.5]{Goto}.
\end{proof}

We would like to note that \cite[Theorem~1.1]{Goto} shows that an integrally closed $\mf m$-primary parameter ideal
in a regular ring $(R, \mf m)$ has the form $x_1^n, x_2, \ldots, x_{d}$ where $x_1, \ldots, x_d$ are minimal generators
of $\mf m$.

\begin{example}
One might ask that, in a formally unmixed Noetherian local ring $(R,\m)$, whether $(x_1,\dots,x_d)^{\lim} = \overline{(x_1,\dots,x_d)}$ for a system of parameters already implies $R$ is regular. However this is not true in general: Let $R = k[[a,b,c,d]]/(a,b)\cap (c,d)$.
Then $R$ is complete, unmixed, has dimension $2$, with $\eh(R) = 2$ and $\lc^1_\mf m (R) \cong k$.
Let $(x,y)$ be a minimal reduction of $\mf m$. It follows from \cite[Theorem~1.5]{limitclosure}
that $\length (R/(x,y)^{\lim}) = 1$, so $(x,y)^{\lim} =\m= \overline{(x,y)}$.
\end{example}

\bibliographystyle{plain}
\bibliography{refs}

\begin{thebibliography}{10}

\bibitem{Andre}
Yves Andr\'{e}.
\newblock La conjecture du facteur direct.
\newblock {\em Publ. Math. Inst. Hautes \'{E}tudes Sci.}, 127:71--93, 2018.

\bibitem{BrunsHerzog}
Winfried Bruns and J\"{u}rgen Herzog.
\newblock {\em Cohen-{M}acaulay rings}, volume~39 of {\em Cambridge Studies in
  Advanced Mathematics}.
\newblock Cambridge University Press, Cambridge, 1993.

\bibitem{CiupercaEnescu}
C\u{a}t\u{a}lin Ciuperc\u{a} and Florian Enescu.
\newblock An inequality involving tight closure and parameter ideals.
\newblock {\em Bull. London Math. Soc.}, 36(3):351--357, 2004.

\bibitem{NNN}
Nguyen~Tu Cuong, Nguyen~Thai Hoa, and Nguyen Thi~Hong Loan.
\newblock On certain length functions associated to a system of parameters in
  local rings.
\newblock {\em Vietnam J. Math.}, 27(3):259--272, 1999.

\bibitem{CuongNhan}
Nguyen~Tu Cuong and Le~Thanh Nhan.
\newblock Pseudo {C}ohen-{M}acaulay and pseudo generalized {C}ohen-{M}acaulay
  modules.
\newblock {\em J. Algebra}, 267(1):156--177, 2003.

\bibitem{CuongQuy}
Nguyen~Tu Cuong and Pham~Hung Quy.
\newblock On the structure of finitely generated modules over quotients of
  {Cohen}--{Macaulay} local rings.
\newblock Preprint available at https://arxiv.org/abs/1612.07638.

\bibitem{limitclosure}
Nguyen~Tu Cuong and Pham~Hung Quy.
\newblock On the limit closure of a sequence of elements in local rings.
\newblock {\em J. Pure Appl. Algebra}, 226(8):20, 2022.
\newblock Id/No 107027.

\bibitem{FedderWatanabe}
Richard Fedder and Kei-ichi Watanabe.
\newblock A characterization of {$F$}-regularity in terms of {$F$}-purity.
\newblock In {\em Commutative algebra ({B}erkeley, {CA}, 1987)}, volume~15 of
  {\em Math. Sci. Res. Inst. Publ.}, pages 227--245. Springer, New York, 1989.

\bibitem{Goto}
Shiro Goto.
\newblock Integral closedness of complete-intersection ideals.
\newblock {\em J. Algebra}, 108(1):151--160, 1987.

\bibitem{GotoNakamura}
Shiro Goto and Yukio Nakamura.
\newblock Multiplicity and tight closures of parameters.
\newblock {\em J. Algebra}, 244(1):302--311, 2001.

\bibitem{HeitmannMa}
Raymond Heitmann and Linquan Ma.
\newblock Big {C}ohen-{M}acaulay algebras and the vanishing conjecture for maps
  of {T}or in mixed characteristic.
\newblock {\em Algebra Number Theory}, 12(7):1659--1674, 2018.

\bibitem{HochsterMonomial}
Melvin Hochster.
\newblock Contracted ideals from integral extensions of regular rings.
\newblock {\em Nagoya Math. J.}, 51:25--43, 1973.

\bibitem{Hochster}
Melvin Hochster.
\newblock Solid closure.
\newblock In {\em Commutative algebra: syzygies, multiplicities, and birational
  algebra ({S}outh {H}adley, {MA}, 1992)}, volume 159 of {\em Contemp. Math.},
  pages 103--172. Amer. Math. Soc., Providence, RI, 1994.

\bibitem{HochsterHunekeBig}
Melvin Hochster and Craig Huneke.
\newblock Infinite integral extensions and big {C}ohen-{M}acaulay algebras.
\newblock {\em Ann. of Math. (2)}, 135(1):53--89, 1992.

\bibitem{HochsterHunekeApplicationsofBigCM}
Melvin Hochster and Craig Huneke.
\newblock Applications of the existence of big {C}ohen-{M}acaulay algebras.
\newblock {\em Adv. Math.}, 113(1):45--117, 1995.

\bibitem{Huneke}
Craig Huneke.
\newblock Tight closure, parameter ideals, and geometry.
\newblock In {\em Six lectures on commutative algebra ({B}ellaterra, 1996)},
  volume 166 of {\em Progr. Math.}, pages 187--239. Birkh\"auser, Basel, 1998.

\bibitem{HunekeMaPhamSmirnov}
Craig Huneke, Linquan Ma, Pham~Hung Quy, and Ilya Smirnov.
\newblock Asymptotic {L}ech's inequality.
\newblock {\em Adv. Math.}, 372:107296, 33, 2020.

\bibitem{SwansonHuneke}
Craig Huneke and Irena Swanson.
\newblock {\em Integral closure of ideals, rings, and modules}, volume 336 of
  {\em London Mathematical Society Lecture Note Series}.
\newblock Cambridge University Press, Cambridge, 2006.

\bibitem{Lech}
Christer Lech.
\newblock On the associativity formula for multiplicities.
\newblock {\em Ark. Mat.}, 3:301--314, 1957.

\bibitem{Lech2}
Christer Lech.
\newblock Note on multiplicities of ideals.
\newblock {\em Ark. Mat.}, 4:63--86 (1960), 1960.

\bibitem{Ratliff}
L.~J. Ratliff, Jr.
\newblock Locally quasi-unmixed {N}oetherian rings and ideals of the principal
  class.
\newblock {\em Pacific J. Math.}, 52:185--205, 1974.

\bibitem{WatanabeChain}
Kei-ichi Watanabe.
\newblock Chains of integrally closed ideals.
\newblock In {\em Commutative algebra ({G}renoble/{L}yon, 2001)}, volume 331 of
  {\em Contemp. Math.}, pages 353--358. Amer. Math. Soc., Providence, RI, 2003.

\end{thebibliography}

\end{document}